\documentclass[a4paper,10pt]{amsart}
\pdfoutput=1
\def\definetac{\newif\iftac}

\ifx%
  \tactrue\undefined%
  \definetac%
  \ifx%
    \state\undefined\tacfalse%
  \else%
    \tactrue%
  \fi%
\fi%

\iftac\else\usepackage{amsthm}\fi

\usepackage{
   amssymb
  ,lmodern
  ,amsmath
  ,stmaryrd
  ,mathrsfs
  ,enumitem
  ,xcolor
  ,datetime
  ,leftidx
  ,marginnote
  ,mathtools
  ,tikz
  ,url
  ,newclude
  ,eucal
  ,xspace
}

\usetikzlibrary{calc}

\definecolor{darkgreen}{rgb}{0,0.45,0}

\usepackage[cmtip,all,2cell,pdf]{xy}\UseAllTwocells

\usepackage[%
   hyperfootnotes=true
  ,colorlinks%
  ,citecolor=darkgreen%
  ,linkcolor=darkgreen%
  ]{hyperref}

\urlstyle{sf}

\usepackage[color=gray!20]{todonotes}

\usepackage{xcolor}
\usepackage{fixme}

\usepackage[%
 margin=1in
,includehead
,includefoot
,headheight=0.5in
,headsep=0.2in
,footskip=3em
,heightrounded
,left=5cm
,right=3cm
,top=4.5cm
,bottom=6cm%
,asymmetric]{geometry}

\usepackage[utf8]{inputenc}

\usepackage{tikz-cd}
\usetikzlibrary{positioning}
\usepackage{xparse}

\ExplSyntaxOn
\NewDocumentCommand{\makeabbrev}{mmm}
 {
  \yoruk_makeabbrev:nnn { #1 } { #2 } { #3 }
 }

\cs_new_protected:Npn \yoruk_makeabbrev:nnn #1 #2 #3
 {
  \clist_map_inline:nn { #3 }
   {
    \cs_new_protected:cpn { #2 } { #1 { ##1 } }
   }
 }
\ExplSyntaxOff

\makeabbrev{\mathbf}{b#1}{b,c,d,e,g,h,i,j,k,l,m,n,o,p,q,r,t,u,v,w,x,y,z,%
              A,B,C,D,E,G,H,I,J,K,L,M,N,O,P,Q,R,T,U,V,W,X,Y,Z}

\makeabbrev{\boldsymbol}{bs#1}{%
    a,b,c,d,e,g,h,i,j,k,l,m,n,o,p,q,r,s,t,u,v,w,x,y,z,%
    A,B,C,D,E,G,H,I,J,K,L,M,N,O,P,Q,R,S,T,U,V,W,X,Y,Z}

\makeabbrev{\mathsf}{sf#1}{a,b,c,d,e,f,g,h,i,j,k,l,m,n,o,p,q,r,s,t,u,v,w,x,y,z,%
                           A,B,C,D,E,F,G,H,I,J,K,L,M,N,O,P,Q,R,S,T,U,V,W,X,Y,Z}

\makeabbrev{\mathfrak}{f#1}{a,b,c,d,e,f,g,h,j,k,l,m,n,o,p,q,r,s,t,u,v,w,x,y,z,%
                             A,B,C,D,E,F,G,H,I,J,K,L,M,N,O,P,Q,R,S,T,U,V,W,X,Y,Z}

\makeabbrev{\mathcal}{c#1}{A,B,C,D,E,F,G,H,I,J,K,L,M,N,O,P,Q,R,S,T,U,V,W,X,Y,Z}
\makeabbrev{\mathbf}{l#1}{A,B,C,D,E,F,G,H,I,J,K,L,M,N,O,P,Q,R,S,T,U,V,W,X,Y,Z}
\makeabbrev{\mathbb}{s#1}{A,B,C,D,E,F,G,H,I,J,K,L,M,N,O,P,Q,R,S,T,U,V,W,X,Y,Z}

\makeatletter
\newif\ifhyperref
\@ifpackageloaded{hyperref}{\hyperreftrue}{\hyperreffalse}
\iftac
  \let\your@state\state
  \def\state#1{\gdef\currthmtype{#1}\your@state{#1}}
  \let\your@staterm\staterm
  \def\staterm#1{\gdef\currthmtype{#1}\your@staterm{#1}}
  \let\defthm\newtheorem
  \def\currthmtype{}
  \ifhyperref
    \def\autoref#1{\ref*{label@name@#1}~\ref{#1}}
  \else
    \def\autoref#1{\ref{label@name@#1}~\ref{#1}}
  \fi
  \AtBeginDocument{%
    \let\old@label\label%
    \def\label#1{%
      {\let\your@currentlabel\@currentlabel%
        \edef\@currentlabel{\currthmtype}%
        \old@label{label@name@#1}}%
      \old@label{#1}}
  }
\else
  \ifhyperref
    \def\defthm#1#2{%
      \newtheorem{#1}{#2}[section]%
      \expandafter\def\csname #1autorefname\endcsname{#2}%
      \expandafter\let\csname c@#1\endcsname\c@thm}
  \else
    \def\defthm#1#2{\newtheorem{#1}[thm]{#2}}
    \ifx\SK@label\undefined\let\SK@label\label\fi
    \let\old@label\label
    \let\your@thm\@thm
    \def\@thm#1#2#3{\gdef\currthmtype{#3}\your@thm{#1}{#2}{#3}}
    \def\currthmtype{}
    \def\label#1{{\let\your@currentlabel\@currentlabel\def\@currentlabel%
        {\currthmtype~\your@currentlabel}%
        \SK@label{#1@}}\old@label{#1}}
    \def\autoref#1{\ref{#1@}}
  \fi
\fi
\makeatother
\numberwithin{equation}{section}
\newtheorem{thm}{Theorem}[section]
\iftac\theoremstyle{plain}\else\theoremstyle{definition}\fi
\defthm{cor}{Corollary}
\defthm{prop}{Proposition}
\defthm{lem}{Lemma}
\defthm{sch}{Scholium}
\defthm{assume}{Assumption}
\defthm{claim}{Claim}
\defthm{conj}{Conjecture}
\defthm{hyp}{Hypothesis}
\defthm{fact}{Fact}
\defthm{quest}{Question}
\defthm{defn}{Definition}
\defthm{defn-prop}{Definition-Proposition}
\defthm{notn}{Notation}
\iftac\theoremstyle{plain}\else\theoremstyle{remark}\fi
\defthm{rmk}{Remark}
\defthm{eg}{Example}
\defthm{egs}{Examples}
\defthm{ex}{Exercise}
\defthm{ceg}{Counterexample}
\defthm{con}{Construction}
\defthm{warn}{Warning}
\defthm{digr}{Digression}
\defthm{per}{Perspective}
\defthm{disc}{Discussion}
\defthm{term}{Terminology}
\defthm{heur}{Heuristics}

\DeclareSymbolFont{bbold}{U}{bbold}{m}{n}
\usepackage{eucal}
  \DeclareSymbolFontAlphabet{\mathbbb}{bbold}

\DeclareMathOperator*{\colim}{colim}

\newcommand{\bDelta}{\boldsymbol{\Delta}}
\newcommand{\sSet}{\textbf{sSet}}

\newlength{\seplen}
\setlength{\seplen}{5pt}
\newlength{\sepwid}
\setlength{\sepwid}{.4pt}
\def\firstblank{\,\rule{\seplen}{\sepwid}\,}

\providecommand{\abbrv}[1]{#1.\@\xspace}
  \providecommand{\ie}{\abbrv{i.e}}

\def\Ran{\text{Ran}}

\def\Lan{\text{Lan}}

\def\ic{$\infty$-category\@\xspace}
\def\ics{$\infty$-categories\@\xspace}
\def\iCat{\cC\mathrm{at}_\infty}
\def\N{\text{N}}
\def\kan{\cK\mathrm{an}}
\let\xto\xrightarrow

\def\op{\text{o}}
\def\Tw{\text{Tw}}
\def\Map{\text{Map}}

\DeclareDocumentEnvironment{enumtag}{m }{\begin{enumerate}[label=\textsc{#1}\oldstylenums{\arabic*}),ref=\textsc{#1}\oldstylenums{\arabic*}]}{\end{enumerate}}

\usepackage{hyphenat}
\setlist[1]{itemsep=0pt}

\title{A Fubini rule for $\infty$-coends}
\author{Fosco Loregian}
\date{\today}
\address{
	Fosco \textsc{Loregian}\newline
	Max Planck Institute for Mathematics\newline
	Vivatsgasse 7, 53111 Bonn --- Germany\newline
	\href{mailto:flore@mpim-bonn.mpg.de}{\sf flore@mpim-bonn.mpg.de}
}

\begin{document}
\begin{abstract}
	We prove a Fubini rule for $\infty$-co/ends of $\infty$-functors $F : \cC^\op\times\cC\to \cD$. This allows to lay down ``integration rules'', similar to those in classical co/end calculus, also in the setting of \ics.
\end{abstract} 
\maketitle
\tableofcontents
\section{Introduction}
In \cite[§5.2.1]{LurieHA} (we shorten the reference to this source to ``HA'' from now on, and similarly we call simply ``T'' the reference \cite{HTT}) the author introduces the definition of \emph{twisted arrow \ic} of an \ic; in \cite{gepner2015lax} this paves the way to the definition of co/end for a $\infty$-functor $F : \cC^\op\times\cC\to \cD$. Here we briefly recall how this construction works.
\begin{defn}
Let $\epsilon \colon \bDelta \to \bDelta$ be the functor $[n] \mapsto   [n] \star [n]^{\op}$. The \emph{edgewise subdivision} $\text{esd}(X)$ of a simplicial set $S$ is defined to be the composite $\epsilon^{*}S$. If $\cC$ is an \ic, we define $\Tw(\cC)$ to be the simplicial set $\epsilon^*\cC$. The $n$-simplices of $\Tw(\cC)$ are, in particular, determined as
\[
\Tw(\cC)_n \cong \textbf{sSet}([n], \Tw(\cC)) = \textbf{sSet}([n]\star [n]^\op,\cC).
\]
\end{defn}
\begin{rmk}
In dimension $0$ and $1$, the $n$-simplices of $\Tw(\cC)$ correspond respectively to edges $f$ of $\cC$ and to commutative squares
\[
\xymatrix{
	\ar[d]_f & \ar[l]^s \ar[d]^g\\
	\ar[r]_t & 
}
\] 
The canonical natural transformations given by the embedding of $[n], [n]^\op$ in the join entail that there is a well-defined \emph{projection map} $\Sigma_{\cC} : \Tw(\cC)\to \cC^\op\times\cC$. Note that from HA.5.2.1.11 we deduce that $\Sigma_{\cC}$ is the right fibration HA.5.2.1.3 (this entails that if $\cC$ is an \ic, then $\Tw(\cC)$ is also an \ic) classified by $\Map : \cC^\op\times\cC\to \cS$.

If the \ic $\cC$ is of the form $\text{N}(A)$ for some category $A$, $\Tw(\cC)$ corresponds to the nerve of the classical twisted arrow category of $A$, as defined in \cite[IX.6.3]{McL}.
\end{rmk}
\begin{defn}
Let $F : \cC^\op\times\cC\to \cD$ be a functor; when it exists, the \emph{end} of $F$ is the limit
\[
\int_C F := \lim_{\Tw(\cC)} (F\cdot\Sigma)
\]
Dually, when it exists, the \emph{coend} of $F$ is the colimit 
\[
\int^C F := \colim_{\Tw(\cC)} (F\cdot\Sigma)
\]
\end{defn}
It is clear that a sufficient condition for $\int^C F$ to exists is that $\cD$ is cocomplete, and dually a sufficient condition for $\int_C F$ to exist is that $\cD$ is complete.

\cite{gepner2015lax} employs this notation to prove [\emph{ibi}, Thm. 1.1] that 
\begin{thm}
Suppose $F \colon \cC \to \iCat$ is a functor of \ics,
\begin{enumtag}{lc}
\item if $\cE \to \cC$ is a cocartesian fibration associated to $F$. Then $\cE$ is the \emph{lax colimit}\footnote{The lax colimit of $F : \cC \to \iCat$ is defined by the coend
\[\int^C \cC_{C/} \times F(C).\]
Dually, the oplax colimit of $F$ is defined by the coend
\[\int^C \cC_{/C} \times F(C),\]
where in both cases the weights are the \emph{slice} \ics of T.1.2.9.2 and T.1.2.9.5.} of the functor $F$.
\item if $\cE \to \cC$ is a cartesian fibration associated to $F$. Then $\cE$ is the \emph{oplax colimit} of the functor $F$.
\end{enumtag}
\end{thm}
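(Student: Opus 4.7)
The plan is to prove case (lc1); case (lc2) will follow by applying the same argument to the opposite functor, interchanging $\cC_{C/}$ with $\cC_{/C}$. Write $L(F) := \int^C \cC_{C/} \times F(C)$ for the lax colimit functor and $U(F) := \cE$ for the cocartesian unstraightening of $F$. The aim is a natural equivalence $L \simeq U$ between functors $\text{Fun}(\cC,\iCat) \to \iCat$. My strategy is to verify that both preserve small colimits in $F$ and agree on representable functors $F = \Map_\cC(C_0, -)$; by the co-Yoneda density formula $F \simeq \int^{C_0} \Map_\cC(C_0,-) \otimes F(C_0)$, this will force $L \simeq U$ as colimit-preserving functors.

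Colimit-preservation of $L$ in $F$ is automatic because coends are themselves colimits and the cartesian product in $\iCat$ commutes with colimits in each variable. For $U$, I would combine the straightening-unstraightening equivalence of T with the observation that the forgetful functor $\iCat_{/\cC} \to \iCat$, being left adjoint to $(-)\times \cC$, preserves small colimits. On representables, the cocartesian (in fact left) fibration classified by $\Map_\cC(C_0,-)$ is the slice fibration $\cC_{C_0/} \to \cC$, so $U(\Map_\cC(C_0,-)) \simeq \cC_{C_0/}$. On the other hand,
\[
L(\Map_\cC(C_0,-)) = \int^C \cC_{C/} \times \Map_\cC(C_0, C),
\]
with $C \mapsto \cC_{C/}$ contravariant and $C \mapsto \Map_\cC(C_0,C)$ covariant. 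Applying the co-Yoneda lemma for $\infty$-coends, $\int^C G(C) \otimes \Map_\cC(C_0, C) \simeq G(C_0)$ with $G = \cC_{-/}$, then produces $\cC_{C_0/}$, matching $U$ on representables.

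The main obstacle is the $\infty$-categorical co-Yoneda lemma itself, i.e.\ the fact that the canonical evaluation map $\int^C G(C) \otimes \Map_\cC(C_0, C) \to G(C_0)$ is an equivalence for every $G \colon \cC^{\op} \to \iCat$. Since by HA.5.2.1.11 the projection $\Sigma_\cC \colon \Tw(\cC) \to \cC^{\op}\times\cC$ is the right fibration classified by $\Map_\cC$, this should reduce to a cofinality statement for the slice fibre $\Tw(\cC) \times_{\cC^{\op}\times\cC}(\{C_0\}\times\cC)$, which I expect to identify with $\cC_{C_0/}$ and which carries $\id_{C_0}$ as an initial object. The $1$-categorical analogue of this reduction is elementary bookkeeping, but the homotopy-coherent upgrade requires delicate control of the edgewise subdivision and of the universal property of slice \ics, and forms the technical core of the argument in \cite{gepner2015lax}.
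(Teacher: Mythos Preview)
First, note that the paper does not itself prove this theorem: it is quoted verbatim as Theorem~1.1 of \cite{gepner2015lax}, so there is no proof in the paper to compare your proposal against.

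That said, your outline has a genuine gap in the step where you argue that $U$ preserves colimits. You factor $U$ as
\[
\mathrm{Fun}(\cC,\iCat)\;\xrightarrow{\;\sim\;}\;\mathrm{coCart}(\cC)\;\hookrightarrow\;\iCat_{/\cC}\;\xrightarrow{\;\mathrm{forget}\;}\;\iCat
\]
and observe that the last arrow is a left adjoint. But the middle arrow --- the (non-full) inclusion of cocartesian fibrations with cocartesian-edge-preserving functors into the slice --- is a \emph{right} adjoint: its left adjoint is the free cocartesian fibration functor $p\mapsto \cE\times_{\cC}\mathrm{Fun}(\Delta^1,\cC)$. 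So it preserves limits, and there is no formal reason it should preserve colimits. Equivalently, you are asserting that a pointwise colimit $\colim_i F_i$ in $\mathrm{Fun}(\cC,\iCat)$ has unstraightened total space $\colim_i \cE_{F_i}$ computed in $\iCat$; this is exactly the nontrivial content of the theorem, not a preliminary.

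In \cite{gepner2015lax} this is handled by model-categorical work (the marked relative nerve is shown to be left Quillen, so that the underlying-$\infty$-category functor on cocartesian fibrations genuinely preserves homotopy colimits). That step, rather than the co-Yoneda reduction you flag at the end, is the technical core. Your cofinality sketch for co-Yoneda --- identifying $\Tw(\cC)\times_{\cC^{\op}\times\cC}(\{C_0\}\times\cC)$ with $\cC_{C_0/}$ and using that $\id_{C_0}$ is initial --- is by contrast essentially complete as stated.
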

 \begin{lem}\label{adjunccc}
Let $\cC$ be a small \ic, and $\cD$ be a presentable \ic; then $\cD$ is tensored and cotensored over $\cS = \N(\kan)$. This entails that there is a two-variable adjunction
\[
\cD^\op \times \cD \xto{\Map} \cS
\qquad
\cS \times \cD \xto{\odot} \cD
\qquad
\cS^\op\times\cD \xto{\pitchfork}\cD
\]
such that
\[ \cD(X\odot D,D')\cong \cS(X, \cD(D,D'))\cong \cD(D, X\pitchfork D') \]
\end{lem}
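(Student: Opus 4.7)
The plan is to construct the tensor $\odot$ and cotensor $\pitchfork$ directly as colimits and limits of constant diagrams, exploiting the fact that a presentable \ic $\cD$ is both complete and cocomplete, while every space $X \in \cS$ is by construction an \ic equivalent to an $\infty$-groupoid and so serves as the shape of a diagram in any \ic.

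Concretely, for $X \in \cS$ and $D \in \cD$, I would define
\[
X \odot D := \colim_X D \qquad \text{and} \qquad X \pitchfork D := \lim_X D,
\]
each taken over the constant diagram $X \to \cD$ with value $D$. Both exist by (co)completeness of $\cD$, and functoriality in $X$ and $D$ is automatic, since maps $X \to Y$ in $\cS$ and $D \to D'$ in $\cD$ induce morphisms of constant diagrams and hence of their (co)limits.

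Next I would verify the two adjunction formulas. Since $\Map(-,D')\colon \cD^\op \to \cS$ carries colimits in $\cD$ to limits in $\cS$, one computes
\[
\cD(X \odot D, D') \simeq \lim_X \cD(D,D') \simeq \cS(X, \cD(D,D')),
\]
where the last equivalence is the self-cotensoring of $\cS$, according to which $X$-indexed powers of $Y \in \cS$ are realized by mapping spaces $\cS(X,Y)$. Dually, since $\Map(D,-)$ preserves limits,
\[
\cD(D, X \pitchfork D') \simeq \lim_X \cD(D,D') \simeq \cS(X, \cD(D,D')).
\]
Chaining these two strings of equivalences yields the displayed two-variable adjunction.

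The main technical obstacle is not any individual equivalence but rather upgrading these pointwise equivalences to a \emph{coherent} two-variable adjunction of \ics, \ie, to equivalences of three-variable functors $\cS^\op \times \cD^\op \times \cD \to \cS$ that are natural in all arguments simultaneously. This is where the straightening/unstraightening machinery of T.Ch.2--3 and the adjoint functor theorem T.5.5.2.9 enter the picture: both $\odot$ and $\pitchfork$ are pinned down by the universal property they enjoy with respect to $\Map$, so uniqueness of adjoints and representability arguments assemble the pointwise data into functors of \ics realizing the required adjunction. Everything else reduces to the universal property of (co)limits.
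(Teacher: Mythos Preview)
The paper does not actually prove this lemma: it is stated without proof, as a standard fact about presentable \ics (implicitly relying on results from T and HA, e.g.\ T.5.5.2.4 for cocompleteness and the general yoga of tensoring in T.\S4.4.4 and HA.\S4.8). So there is no argument in the paper to compare yours against.

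That said, your sketch is the correct one and is essentially how the result is established in the references. Defining $X\odot D$ and $X\pitchfork D$ as the colimit and limit of the constant $X$-shaped diagram at $D$ is exactly the content of the tensoring/cotensoring of a bicomplete \ic over $\cS$, and your derivation of the adjunction identities from the limit-preservation properties of $\Map$ together with the identification $\lim_X Y \simeq \cS(X,Y)$ in $\cS$ is the standard route. You are also right to flag that the only genuine work lies in assembling these pointwise equivalences into a coherent two-variable adjunction; this is handled in the literature via the adjoint functor theorem and the machinery around T.5.2, but the paper takes all of this for granted.
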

From the existence of these isomorphisms it is clear that
\begin{align}
V\odot(W\odot D) \cong W\odot(V\odot D) \cong (V\times W)\odot D\label{iso:fst}\\
V\pitchfork(W\pitchfork D) \cong W\pitchfork(V\pitchfork D)\cong (V\times W)\pitchfork D\label{iso:snd}
\end{align}
\section{The Fubini rule}
\begin{lem}\label{end-is-functor}
Let $F : \cC^\op\times\cC\to \cD$ be a $\infty$-functor and $\cC,\cD$ \ics as in the assumptions of \autoref{adjunccc}. Then
\begin{itemize}
	\item $F\mapsto \int^C F$ is functorial, and it is a left adjoint;
	\item $F\mapsto \int_C F$ is functorial, and it is a right adjoint.
\end{itemize}
\end{lem}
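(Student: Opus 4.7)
The plan is to exhibit the coend as a composite of two functors, each of which is a left adjoint, and dually for the end. Concretely, for any $\infty$-functor $F\colon \cC^\op\times\cC\to \cD$, the coend is $\int^C F=\colim_{\Tw(\cC)}(\Sigma^*F)$ where $\Sigma^*$ denotes precomposition with the projection $\Sigma=\Sigma_\cC\colon\Tw(\cC)\to\cC^\op\times\cC$. Thus
\[
\textstyle\int^C(-) \;=\; \colim_{\Tw(\cC)}\circ\;\Sigma^{*},\qquad \int_C(-) \;=\; \lim_{\Tw(\cC)}\circ\;\Sigma^{*},
\]
as functors $\text{Fun}(\cC^\op\times\cC,\cD)\to\cD$. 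Functoriality in $F$ of $\int^CF$ and $\int_CF$ is then immediate, since both displayed arrows are honest functors of $\infty$-categories.

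For the adjointness, I would assemble the two standard adjunctions available in this setting. Because $\cD$ is presentable, it is bicomplete, so precomposition along $\Sigma$ admits \emph{both} adjoints: the left Kan extension $\Lan_\Sigma\dashv\Sigma^*$ and the right Kan extension $\Sigma^*\dashv\Ran_\Sigma$ (this is HA.4.3.3, requiring only existence of enough co/limits in the target, and is valid since $\Tw(\cC)$ is a small $\infty$-category). In particular $\Sigma^*$ is a left adjoint. Simultaneously, $\colim_{\Tw(\cC)}\colon \text{Fun}(\Tw(\cC),\cD)\to\cD$ is a left adjoint to the constant diagram functor $\cD\to\text{Fun}(\Tw(\cC),\cD)$ (this is true in any $\infty$-category with $\Tw(\cC)$-shaped colimits). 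The composite of two left adjoints is a left adjoint, hence $\int^C(-)$ is a left adjoint, with right adjoint given by $\Ran_\Sigma\circ\text{const}$.

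The dual argument covers the end: $\Sigma^*$ is equally a right adjoint (with left adjoint $\Lan_\Sigma$), $\lim_{\Tw(\cC)}$ is right adjoint to the constant diagram, and the composition of right adjoints is a right adjoint. This gives the second bullet, with left adjoint $\Lan_\Sigma\circ\text{const}$.

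The only real point requiring care is the $\infty$-categorical existence of Kan extensions along $\Sigma$, but this is a routine invocation of Lurie's general theory of Kan extensions along maps of small $\infty$-categories into bicomplete targets. There is no deep combinatorial obstacle: since $\cD$ is presentable it is, in particular, complete and cocomplete, and $\Tw(\cC)$, $\cC^\op\times\cC$ are small, so both Kan extensions exist and the adjunction formula $\Sigma^*\dashv\Ran_\Sigma$ (resp.\ $\Lan_\Sigma\dashv\Sigma^*$) holds. The rest of the proof is formal composition of adjoint pairs.
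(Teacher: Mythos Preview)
Your proof is correct and follows essentially the same approach as the paper: you write $\int^C = \colim_{\Tw(\cC)}\circ\,\Sigma^*$ as a composite of two left adjoints (with right adjoints $\Ran_\Sigma$ and the constant functor, respectively), and dualize for the end. The paper's argument is identical, even identifying the same right adjoint $\Ran_\Sigma\circ c(-)$ to the coend functor and left adjoint $\Lan_\Sigma\circ c(-)$ to the end functor.
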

\begin{proof}
We only prove the first statement for coends; the other one is dual.

Since $\int^C F = \colim_{\Tw(\cC)}(F\cdot \Sigma)$ acts on $F$ as a composition of $\infty$-functors, it is clearly functorial; then in the diagram
\[\textstyle
\int^C : 
\xymatrix@C=2cm{
[\cC^\op\times\cC,\cD] \ar@<5pt>[r]^-{\Sigma^*} &\ar@<5pt>@{.>}[l]^-{\Ran_\Sigma} \ar@{}[l]|-\perp[\Tw(\cC),\cD] \ar@<5pt>[r]^-{\colim_{\Tw(\cC)}} &\ar@<5pt>@{.>}[l]^-{c} \ar@{}[l]|-\perp\cD\\
}
\]
the composition $\int^C = \colim_{\Tw(\cC)} \cdot \Sigma^*$ is a left adjoint because it is a composition of left adjoints ($c = t^*$ is the constant functor inverse image of the terminal map $\Tw(\cC)\to *$). Dually, the left adjoint to the end functor $\int_C $ is given by $\Lan_\Sigma \cdot c(D)$.
\end{proof}
Loosely speaking, the Fubini rule for co/ends asserts that when the domain of a functor $F : \cA^\op\times\cA\to \cD$ results as a product $(\cC\times\cE)^\op\times(\cC\times\cE)$, then the co/ends of $F$ can be computed as ``iterated integrals''
\begin{gather}
\int^{(C,E)} F \cong \iint^{CE} F \cong \iint^{EC} F\label{coends}\\
\int_{(C,E)} F \cong \iint_{CE} F \cong \iint_{EC} F\label{ends}
\end{gather}
These identifications hide a slight abuse of notation, that is worth to make explicit in order to avoid confusion: thanks to \autoref{end-is-functor} the three objects of \eqref{coends} can be thought as images of $F$ along certain functors, and the Fubini rule asserts that they are linked by canonical isomorphisms; we can easily turn these functors into having the same type by means of the cartesian closed structure of $\sSet$:
\begin{gather}
 \iint^{CE} := [\cC^\op\times\cC\times\cE^\op\times\cE,\cD] \cong [\cC^\op\times\cC, [\cE^\op\times\cE,\cD]] \xto{[\cC^\op\times\cC,\int^E]} [\cC^\op\times\cC,\cD]\xto{\int^C} \cD\label{primo}\\
 \iint^{EC} := [\cC^\op\times\cC\times\cE^\op\times\cE,\cD] \cong [\cE^\op\times\cE, [\cC^\op\times\cC,\cD]] \xto{[\cE^\op\times\cE,\int^C]} [\cE^\op\times\cE,\cD]\xto{\int^E} \cD\label{secondo}\\
 \int^{(C,E)} := [\cC^\op\times\cC\times\cE^\op\times\cE,\cD] \cong [(\cC\times\cE)^\op\times(\cC\times\cE),\cD] \to \cD.\label{terzo}
 \end{gather} 
(of course, we can provide similar definitions for the iterated end functor).

Once that this has been clarified, we can deduce the isomorphisms \eqref{coends} and \eqref{ends} from the fact that the three functors $\iint^{CE},\iint^{EC},\int^{(C,E)}$ have right adjoints isomorphic to each other, and hence they must be isomorphic themselves.
\begin{thm}[Fubini rule for co/ends]\label{fubone}
Let $F : \cC^\op\times\cC\times\cE^\op\times\cE\to \cD$ be a $\infty$-functor. Then the $\infty$-functors $\iint^{CE},\iint^{EC},\int^{(C,E)}$ of \eqref{primo}, \eqref{secondo}, \eqref{terzo} are naturally isomorphic.
\end{thm}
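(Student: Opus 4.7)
The plan is to execute the strategy announced just before the statement: show that each of $\iint^{CE}$, $\iint^{EC}$, and $\int^{(C,E)}$ is a left adjoint, identify the three right adjoints explicitly, and argue they are naturally isomorphic; by the uniqueness of left adjoints this upgrades to a natural isomorphism of the left adjoints themselves. That each of the three functors is a left adjoint is immediate from \autoref{end-is-functor}: the iterated functors are compositions of left adjoints of the form $\int^C$, $\int^E$, and the postcomposition $[\cC^\op\times\cC,\int^E]$ (itself a left adjoint, with right adjoint obtained by postcomposing with the right adjoint of $\int^E$), while $\int^{(C,E)}$ is the coend functor of \autoref{end-is-functor} applied to the product \ic $\cC\times\cE$.

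Next I would identify the three right adjoints. The factorization $\int^C = \colim_{\Tw(\cC)}\circ\Sigma^*_\cC$ exhibits the right adjoint of $\int^C$ as $D\mapsto \Ran_{\Sigma_\cC}(c_{\Tw(\cC)}(D))$, a functor $\cC^\op\times\cC\to\cD$, with analogous formulas for $\int^E$ and $\int^{(C,E)}$. Unwinding \eqref{primo}, \eqref{secondo}, \eqref{terzo} then presents the three right adjoints at $D$ as the iterated right Kan extensions
\[
\Ran_{\Sigma_\cC}\bigl(c\circ\Ran_{\Sigma_\cE}(c(D))\bigr),\quad \Ran_{\Sigma_\cE}\bigl(c\circ\Ran_{\Sigma_\cC}(c(D))\bigr),
\]
and, in the third case, the single right Kan extension $\Ran_{\Sigma_{\cC\times\cE}}(c(D))$.

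The key fact bridging these is that the edgewise subdivision preserves products: from the formula $(\epsilon^*(S\times T))_n = S([n]\star[n]^\op)\times T([n]\star[n]^\op)$ one reads off a canonical isomorphism $\Tw(\cC\times\cE)\cong \Tw(\cC)\times\Tw(\cE)$, and naturality of $\Sigma$ identifies $\Sigma_{\cC\times\cE}$ with $\Sigma_\cC\times\Sigma_\cE$ modulo the braiding $(\cC\times\cE)^\op\times(\cC\times\cE)\cong \cC^\op\times\cC\times\cE^\op\times\cE$. Combined with the general principle that right Kan extension along a product of functors can be computed as an iterated right Kan extension in either order, all three expressions collapse to $\Ran_{\Sigma_\cC\times\Sigma_\cE}$ of the constant functor at $D$. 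The main obstacle I anticipate is purely bookkeeping—checking that the various comparison isomorphisms are natural in $D$ so that the isomorphism of right adjoints passes back, via the Yoneda lemma, to an isomorphism of the original left adjoints.
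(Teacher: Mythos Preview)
Your proposal is correct and shares the paper's high-level strategy: exhibit each of the three functors as a left adjoint and then compare their right adjoints. Where you diverge is in \emph{how} you compare those right adjoints. The paper invokes the preliminary Lemma~2.3 to compute each $\Ran_{\Sigma}(c(D))$ explicitly as the cotensor $(C,C')\mapsto \Map(C,C')\pitchfork D$, and then reduces the comparison to the identity \eqref{iso:snd} together with the product formula $\Map_{\cC\times\cE}\big((C,E),(C',E')\big)\simeq \Map_\cC(C,C')\times\Map_\cE(E,E')$. You instead observe that $\Tw$ preserves products (so $\Sigma_{\cC\times\cE}\simeq \Sigma_\cC\times\Sigma_\cE$) and appeal to the general fact that right Kan extension along a product can be computed iteratively in either order; this collapses all three right adjoints to $\Ran_{\Sigma_\cC\times\Sigma_\cE}(c(D))$ without ever writing down a mapping space. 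Your route is more structural and bypasses Lemma~2.3 and the cotensored structure of \autoref{adjunccc} entirely---in principle it works whenever the relevant Kan extensions exist---whereas the paper's route makes the comparison isomorphism completely explicit at the level of objects. The one place where your sketch is genuinely thin is the ``iterated Ran along a product'' step: unwinding it requires a Beck--Chevalley-type identification $\Ran_{\Sigma_\cC\times\id}\circ\text{pr}^* \simeq \text{pr}^*\circ\Ran_{\Sigma_\cC}$ (equivalently, that $\Ran_{f\times\id}$ corresponds under currying to postcomposition with $\Ran_f$), which is true pointwise but deserves a sentence of justification.
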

In order to prove \ref{fubone} we need a preliminary observation characterizing the right adjoint to $\int^C : [\cC^\op\times\cC,\cD]\to \cD$.
\begin{lem}
The functor $R = \Ran_\Sigma(c(\_))$ acts ``cotensoring with mapping space'': more precisely, the functor $RD : \cC^\op\times\cC\to \cD$ is isomorphic to the functor
\[(C,C')\mapsto \Map_{\cC}(C,C')\pitchfork D\]
Dually, the functor $L = \Lan_\Sigma(c(\_))$ acts ``tensoring with mapping space'': more precisely, the functor $LD : \cC^\op\times\cC\to \cD$ is isomorphic to the functor
\[(C,C')\mapsto \Map_{\cC}(C,C')\odot D\]
\end{lem}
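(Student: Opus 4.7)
The plan is to compute the Kan extensions $R = \Ran_\Sigma(c(\_))$ and $L = \Lan_\Sigma(c(\_))$ by applying the pointwise formula and then exploiting the fact that $\Sigma$ is a right fibration classified by $\Map$ to replace the indexing comma $\infty$-category by its fiber, which is $\Map_\cC(C,C')$.

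More concretely, I would proceed as follows. By the pointwise formula for right Kan extensions, the value $(RD)(C,C')$ is the limit of $c(D)$ over the comma $\infty$-category $(C,C')/\Sigma$. Since $c(D)$ is constant, this limit computes the cotensor of the classifying space $|(C,C')/\Sigma|$ with $D$, using the general principle that $\lim_\cK c(D) \simeq |\cK| \pitchfork D$ in any cotensored $\infty$-category. The crucial step is then to identify this classifying space with $\Map_\cC(C,C')$, which follows from the fact that $\Sigma$ is a right fibration classified by $\Map$ together with the (un)straightening equivalence of T.3.2: the relevant slice of a right fibration, after passing to its classifying space, recovers the value of the classifying functor. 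Combining these gives $(RD)(C,C') \simeq \Map_\cC(C,C') \pitchfork D$.

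The dual statement on $L$ is proved by the analogous argument with colimits in place of limits, invoking the companion formula $\colim_\cK c(D) \simeq |\cK| \odot D$, so that $(LD)(C,C') \simeq \Map_\cC(C,C') \odot D$. Alternatively, one can derive it by passing to opposite $\infty$-categories, noting that $\Sigma^\op$ is a left fibration classified by the same functor $\Map$ and reusing the fiber-level computation there.

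The main obstacle lies in the identification of the classifying space of the relevant slice with $\Map_\cC(C,C')$: one must track the variance conventions for right fibrations, slice $\infty$-categories, and the direction of Kan extensions, and verify that the inclusion of the fiber into the slice is cofinal (or coinitial, as required). Once this cofinality is secured via the straightening equivalence, the rest is essentially the universal property of the tensor and cotensor supplied by \autoref{adjunccc}.
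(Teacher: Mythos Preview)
Your proposal is correct and follows essentially the same route as the paper: pointwise Kan extension, identification of the limit of a constant diagram with a cotensor by the indexing space, and then replacement of the comma $\infty$-category $(C,C')/\Sigma$ by the fiber $\Map_\cC(C,C')$. The only cosmetic difference is that where you invoke straightening/unstraightening and speak of cofinality, the paper writes down the explicit pullback square exhibiting $\Map_\cC(C,C')$ and $(C,C')/\Sigma$ as pullbacks of $\Sigma$ along, respectively, $\Delta^0\to\cC^\op\times\cC$ and the contractible coslice $(\cC^\op\times\cC)_{(C,C')/}\to\cC^\op\times\cC$, deducing that the fiber-to-comma map is a Kan--Quillen weak equivalence; since $\firstblank\pitchfork D$ is homotopy invariant, this is all that is needed for a constant diagram, and your appeal to cofinality is stronger than necessary but not wrong.
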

\begin{proof}
We only prove the first statement for coends; the other one is dual.

Being $c(D)$ the constant functor on $D\in\cD$, the pointwise formula for right Kan extensions (see \cite[6.4.9]{Cisino} for the fact that ``Ran are limits'') yields that the desired limit consists of cotensoring with the slice category $(C,C')/\Sigma$ regarded as a simplicial set (in the Kan-Quillen model structure); now, if we consider the diagram
\[
\xymatrix{
\Map_{\cC}(C,C') \ar[r]^\sim\ar[d]& (C,C')/\Sigma \ar[r]\ar[d]& \Tw(\cC) \ar[d]\\
\Delta^0 \ar[r]_\sim & (\cC^\op\times\cC)_{(C,C')/} \ar[r] & \cC^\op\times\cC
}
\]
expressing the fiber of $\Sigma$, \ie the mapping spaces $\Map_{\cC}(C,C')$, as suitable pullbacks, we can easily see that $(\cC^\op\times\cC)_{(C,C')/}$ is contractible in Kan-Quillen (it has an initial object), hence, in the $\infty$-category of spaces, the object $(C,C')/\Sigma$ exhibits the same universal property of $\Map_{\cC}(C,C')$. Since the functor $\firstblank\pitchfork D$ preserves Kan-Quillen weak equivalences, it turns out that
\[
\Ran_\Sigma(c(D)) \cong (C,C')/\Sigma \pitchfork D \cong \Map_{\cC}(C,C')\pitchfork D,
\]
and this concludes the proof.
\end{proof}
\begin{proof}[Proof of \autoref{fubone}]
The Fubini rule now follows from uniqueness of adjoints (T.5.2.1.3, T.5.2.1.4): in diagram
\[
\xymatrix@R=3mm{
\lambda F . \int^C\kern-.35em\int^E F \ar@{-|}[r] & \lambda D. \lambda CC'.\lambda EE'.\Map_\cC(C,C')\pitchfork \Big(\Map_\cE(E,E')\pitchfork D\Big)\ar@{=}[d]^\wr\\
\lambda F . \int^E\kern-.35em\int^C F \ar@{-|}[r] & \lambda D. \lambda EE'.\lambda CC'.\Map_\cE(E,E')\pitchfork \Big(\Map_\cC(C,C')\pitchfork D\Big)\ar@{=}[d]^\wr\\
 & \lambda D.\lambda CEC'E' \Big(\Map_\cC(C,C')\times\Map_\cE(E,E')\Big)\pitchfork D\ar@{=}[d]^\wr\\
\lambda F . \int^{(C,E)} F \ar@{-|}[r] & \Map_{\cC\times\cE}\big((C,E),(C',E')\big)\pitchfork D
}
\]
the vertical isomorphisms on the right are justified by \eqref{iso:snd}. A completely analogous argument, using \eqref{iso:fst} instead, and the left adjoints given by tensoring with the derived mapping space, gives the Fubini rule for \eqref{ends}.
\end{proof}

\paragraph{\bf Acknowledgements.} This document was partially written during the author's stay at MPIM. An initial draft of Lemma 2.3's proof has been tweaked by S. Ariotta in a private conversation.
\bibliography{allofthem}{}
\bibliographystyle{amsalpha}
\hrulefill
\end{document}